\theoremstyle{definition}
\newtheorem{theorem}{Theorem}
\newtheorem{lemma}[theorem]{Lemma}
\newcommand\reallywidehat[1]{%
\savestack{\tmpbox}{\stretchto{%
  \scaleto{%
    \scalerel*[\widthof{\ensuremath{#1}}]{\kern-.6pt\bigwedge\kern-.6pt}%
    {\rule[-\textheight/2]{1ex}{\textheight}}
  }{\textheight}%
}{0.5ex}}%
\stackon[1pt]{#1}{\tmpbox}%
}
\begin{document}

 \title{Bounded point derivations and functions of bounded mean oscillation}


\author{Stephen Deterding \thanks{email: Stephen.Deterding@westliberty.edu} \\
West Liberty University,\\
West Liberty, WV, USA}




\date{}

\maketitle

\begin{abstract}
Let $X$ be a subset of the complex plane and let $A_0(X)$ denote the space of VMO functions that are analytic on $X$. $A_0(X)$ is said to admit a bounded point derivation of order $t$ at a point $x_0 \in \partial X$ if there exists a constant $C$ such that $|f^{(t)}(x_0)|\leq C ||f||_{BMO}$ for all functions in $VMO(X)$ that are analytic on $X \cup \{x_0\}$. In this paper, we give necessary and sufficient conditions in terms of lower $1$-dimensional Hausdorff content for $A_0(X)$ to admit a bounded point derivation at $x_0$. These conditions are similar to conditions for the existence of bounded point derivations on other functions spaces.
\end{abstract}

\section{Introduction}

Let $X$ be a subset of the complex plane and suppose that $f$ is analytic on a neighborhood of $X$. If $x_0$ is an interior point of $X$, then it follows from the Cauchy estimates that $f'(x_0)$ cannot be too large relative to the size of $f$. To be precise, in this situation there exists a positive number $C$ such that for every function $f$ analytic on $X$, $|f'(x_0)| \leq C ||f||_{\infty}$, where $||f||_{\infty}$ is the supremum norm of $f$ on $X$. However, this is not true if $x_0$ is a boundary point. For example, if $X$ is the unit disk then the sequence $f_n = z^n$ has supremum norm 1 on $X$ for all $n$ but $f_n'(1) \to \infty$ as $n\to \infty$.

\bigskip

Runge's theorem states that every analytic function on a neighborhood of $X$ can be approximated uniformly by rational functions with poles off $X$, so we can suppose that the function $f$ is a rational function with poles off $X$. Let $R(X)$ denote the closure of the rational functions with poles off $X$ in the uniform norm. For non-negative integer values of $t$ we say that $R(X)$ admits a bounded point derivation of order $t$ at $x_0$ if there exists a constant $C$ such that for all rational functions $f$ with poles off $X$, $|f^{(t)}(x_0)| \leq C ||f||_{\infty}$. From the above discussion, we see that $R(X)$ admits a bounded point derivation at all interior points of $X$, while at a boundary point $x_0$ the derivatives of analytic functions are bounded in the uniform norm if and only if $R(X)$ admits a bounded point derivation at $x_0$. 

\bigskip

One can define bounded point derivations for the closure of rational functions in other Banach spaces as well. If $B$ is a Banach space with norm $||\cdot||_B$ and $R_B(X)$ denotes the closure of rational functions in $B$ then we say that $R_B(X)$ admits a bounded point derivation of order $t$ at $x_0$ if there exists a constant $C$ such that for all rational functions $f$ with poles off $X$, $|f^{(t)}(x_0)| \leq C ||f||_{B}$. 

\bigskip

Necessary and sufficient conditions for the existence of bounded point derivations have been determined for the closure of rational functions in the following Banach spaces $B$: $C(X)$, $L^P(X)$, Lip$\alpha(X)$, and in this paper we will determine these conditions for $VMO(X)$, the space of functions of vanishing mean oscillation. Our main theorem is the following.

\bigskip

\begin{theorem}
\label{BMO}

Let $X$ be a subset of the complex plane and let $A_0(X)$ denote the space of VMO functions that are analytic on $X$. Choose $x_0 \in \partial X$ and let $A_n = \{2^{-(n+1)} \leq |z-x_0| \leq 2^{-n}\}$. Then there exists a bounded point derivation of order $t$ on $A_0(X)$ at $x_0$ if and only if 

\begin{equation*}
    \sum_{n=1}^{\infty} 2^{(t+1)n} M^1_*(A_n \setminus X) < \infty,
\end{equation*}

\bigskip

\noindent where $M^1_*$ denotes lower 1-dimensional Hausdorff content.

\end{theorem}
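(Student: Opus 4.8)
The plan is to follow the pattern established for bounded point derivations on $R(X)$ and on Lipschitz classes: given a test function, localize it over the dyadic annuli $A_n$, estimate each localized piece in terms of the lower $1$-content of the obstruction set in $A_n$, and sum. Two ingredients will carry the argument. The first is a Vitushkin-type localization operator $T_\varphi g(z)=\varphi(z)g(z)-\frac{1}{\pi}\int\frac{g(\zeta)\,\bar\partial\varphi(\zeta)}{z-\zeta}\,dA(\zeta)$, which kills constants, satisfies $T_\varphi g(\infty)=0$, is analytic wherever $g$ is analytic and $\varphi$ is locally constant, and which I will need to be bounded on $BMO$ and to carry $VMO$ into $VMO$, with $\|T_\varphi g\|_{BMO}\lesssim\|g\|_{BMO}$ uniformly over bumps $\varphi$ adapted to a single dyadic scale $n$ (that is, $|\bar\partial\varphi|\lesssim 2^{n}$ and $\operatorname{supp}\varphi\subseteq A_n^{*}:=A_{n-1}\cup A_n\cup A_{n+1}$). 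The second is the two-sided fact that a complex measure $\mu$ has Cauchy transform of bounded mean oscillation exactly when $\mu$ has linear growth, with $\|\widehat\mu\|_{BMO}\approx\sup_{z,r}|\mu|(D(z,r))/r$; together with the Frostman-type description $M^1_*(E)=\sup\{\mu(E):\mu\ge 0,\ \operatorname{supp}\mu\subseteq E,\ \mu(D(z,r))\le r\}$ this identifies lower $1$-content with the natural ``$BMO$ capacity'' of $E$ up to constants. These combine into the scale estimate that drives everything: if $g\in BMO$ vanishes at $\infty$ and is analytic off a compact subset $E$ of $A_n^{*}$, then $\bar\partial g$ is a measure of linear growth $\lesssim\|g\|_{BMO}$ supported on $E$, so writing $g=\frac{1}{\pi}\int\frac{1}{z-\zeta}\,d(\bar\partial g)(\zeta)$ and differentiating $t$ times at $x_0$ gives, since $|\zeta-x_0|\approx 2^{-n}$ on $E$,
\[
|g^{(t)}(x_0)|\ \le\ \frac{t!}{\pi}\int\frac{d|\bar\partial g|(\zeta)}{|\zeta-x_0|^{t+1}}\ \lesssim\ 2^{(t+1)n}\,|\bar\partial g|(E)\ \lesssim\ 2^{(t+1)n}\,M^1_*(E)\,\|g\|_{BMO}.
\]
(To make $\bar\partial g$ literally a measure one first reduces, by density in $BMO$, to test functions smooth on a neighborhood of $X\cup\{x_0\}$.)

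For sufficiency, assume $\sum_n 2^{(t+1)n}M^1_*(A_n\setminus X)<\infty$ and take $f\in VMO(X)$ analytic on $X\cup\{x_0\}$. Choose a partition of unity $\sum_{n\ge1}\varphi_n\equiv 1$ on a punctured neighborhood of $x_0$ with $\operatorname{supp}\varphi_n\subseteq A_n^{*}$ and the scale bounds above, and set $g_n=T_{\varphi_n}f$. Each $g_n$ vanishes at $\infty$, is analytic off a compact subset of $A_n^{*}\setminus X$, has $\|g_n\|_{BMO}\lesssim\|f\|_{BMO}$, and the localization identity together with the scale estimate (which controls the tail and hence forces convergence) gives $f^{(t)}(x_0)=\sum_n g_n^{(t)}(x_0)$, up to a single analytic remainder supported away from $x_0$ and contributing $\lesssim\|f\|_{BMO}$. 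Feeding in the scale estimate and the subadditivity $M^1_*(A_n^{*}\setminus X)\le M^1_*(A_{n-1}\setminus X)+M^1_*(A_n\setminus X)+M^1_*(A_{n+1}\setminus X)$,
\[
|f^{(t)}(x_0)|\ \lesssim\ \|f\|_{BMO}+\sum_{n\ge1}|g_n^{(t)}(x_0)|\ \lesssim\ \Big(1+\sum_{n}2^{(t+1)n}M^1_*(A_n\setminus X)\Big)\|f\|_{BMO},
\]
which is the desired bound, so $A_0(X)$ admits a bounded point derivation of order $t$ at $x_0$.

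For necessity I will contradict the existence of a bounded point derivation when the series diverges, by producing functions in $VMO(X)$ analytic on $X\cup\{x_0\}$ with bounded $BMO$ norm and arbitrarily large $t$-th derivative at $x_0$. For each $n$ pick, using the capacity description of $M^1_*$, a positive measure $\mu_n$ supported on $A_n\setminus X$ with $\mu_n(D(z,r))\le r$ and $\mu_n(A_n\setminus X)\ge\tfrac12 M^1_*(A_n\setminus X)$; by a pigeonhole over boundedly many angular sectors of $A_n$ one may also assume $\arg(\zeta-x_0)$ varies little on $\operatorname{supp}\mu_n$, so that $\bigl|\int d\mu_n(\zeta)/(\zeta-x_0)^{t+1}\bigr|\gtrsim 2^{(t+1)n}\mu_n(A_n\setminus X)$. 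For each $N$ put $f_N=\sum_{n=1}^N\lambda_n\widehat{\mu_n}$ with unimodular $\lambda_n$ chosen to align the phases of the numbers $\int d\mu_n/(\zeta-x_0)^{t+1}$. Then $f_N$ is analytic off $\bigcup_{n\le N}(A_n\setminus X)\subseteq\mathbb{C}\setminus X$ and at $x_0$; since the $A_n$ are disjoint dyadic annuli the combined measure $\sum\lambda_n\mu_n$ still has linear growth, so $\|f_N\|_{BMO}\lesssim 1$ uniformly in $N$, and (choosing the $\mu_n$ with vanishing linear growth, or replacing them by suitable mollifications) $f_N\in VMO(X)$; meanwhile $|f_N^{(t)}(x_0)|=t!\sum_{n\le N}\bigl|\int d\mu_n/(\zeta-x_0)^{t+1}\bigr|\gtrsim\sum_{n\le N}2^{(t+1)n}M^1_*(A_n\setminus X)\to\infty$, the required contradiction.

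The part I expect to be the main obstacle is the $BMO$-boundedness of the localization operator, equivalently the norm comparison $\|\widehat\mu\|_{BMO}\approx\sup_{z,r}|\mu|(D(z,r))/r$ relating the Cauchy transform of $\mu$ to the linear growth of $\mu$; this is exactly where the structure of $BMO$ (as opposed to the sup norm or a Lipschitz norm) enters and where the relevant capacity turns out to be comparable to ordinary $1$-dimensional Hausdorff content, so that the answer can be phrased via $M^1_*$. A secondary but genuine technicality, in the necessity half, is arranging that the extremal functions really belong to $VMO(X)$ — that the content-realizing measures can be taken with vanishing, not merely bounded, linear growth (or suitably mollified) without losing more than a constant in mass — so that the $BMO$ norms stay bounded while the $t$-th derivatives at $x_0$ blow up.
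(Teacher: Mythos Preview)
Your outline has the right architecture --- localize over the dyadic annuli, extract a scale estimate relating $|g^{(t)}(x_0)|$ to the lower $1$-content of the obstruction in each $A_n$, and sum --- and your necessity argument is close in spirit to the paper's. (The paper avoids your sector-pigeonhole by inserting the unimodular weight $(\zeta/|\zeta|)^{t+1}$ into the Cauchy kernel, so that $f_n^{(t)}(0)=t!\int|\zeta|^{-(t+1)}\,d\nu_n$ is automatically positive; and it chooses the Frostman measures with growth $\epsilon_n r$, $\epsilon_n\downarrow 0$, which both forces the Cauchy transforms into $VMO$ and makes $\|g_n\|_*\to 0$ while $g_n^{(t)}(0)$ stays bounded below.)

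The genuine gap is in the sufficiency half, in how you derive the scale estimate. Two related problems. First, the ``two-sided fact'' $\|\widehat\mu\|_{BMO}\approx\sup_{z,r}|\mu|(D(z,r))/r$ is only easy in one direction (linear growth $\Rightarrow$ $BMO$); the converse, which is what you use to read linear growth of $\bar\partial g$ off from $\|g\|_{BMO}$, is not available for complex measures and you have not justified it. Second, and more structurally, your Frostman description of $M^1_*$ with merely linear growth $\mu(D(z,r))\le r$ actually characterizes the \emph{upper} content $M^1$; the lower content $M^1_*$ requires measures with $\mu(D(z,r))=o(r)$. So even granting your growth claim, covering $E$ by squares would only yield $|\bar\partial g|(E)\lesssim M^1(E)\|g\|_{BMO}$, not $M^1_*(E)\|g\|_{BMO}$. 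You flag this $VMO$/vanishing-growth issue in your last paragraph but never feed it into the scale estimate; without it the $VMO$ hypothesis on $f$ is unused in your sufficiency proof.

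The paper sidesteps both issues by not passing through $\bar\partial g$ at all. For fixed $n$ it covers the compact singular set $K_n\subset A_n\setminus X$ by dyadic squares $Q_j$ of side $r_j$, builds a partition of unity $\phi_j$ adapted to the enlarged squares $Q_j^*$, and after Green's theorem (using that $\int_{Q_j^*}(z-x_0)^{-(t+1)}\bar\partial\phi_j\,dA=0$ to subtract the mean) bounds each piece directly by the mean oscillation of $f$:
\[
\left|\int_{Q_j^*}\frac{f(z)}{(z-x_0)^{t+1}}\,\bar\partial\phi_j\,dA\right|\ \le\ C\,2^{(t+1)n}\,r_j\,\Omega_f\!\left(\tfrac32 r_j\right).
\]
Summing over $j$ gives $C\,2^{(t+1)n}\sum_j h(r_j)$ with $h(r)=r\,\Omega_f(\tfrac32 r)$. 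Because $f\in VMO$ one has $h(r)/r\to 0$, so $h$ is an admissible measure function in the definition of $M^1_*$, and taking the infimum over covers yields $C\,2^{(t+1)n}M^1_*(K_n)$. This is precisely how the $VMO$ hypothesis converts into the \emph{lower} content, and it replaces both of the unverified steps in your argument.
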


\bigskip

In the next section, we review some of the results of bounded point derivations on other function spaces to show how the conditions for bounded point derivations on $A_0(X)$ compare with those for the other Banach spaces. In section 3 we review the concepts of BMO spaces and Hausdorff contents, and in section 4 we prove Theorem \ref{BMO}. In the last section, Theorem \ref{BMO} is used to provide examples of sets with and without bounded point derivations on $A_0(X)$.

\section{Bounded Point Derivations on Other Function Spaces}

Let $U$ be an open subset of the complex plane and let $0 < \alpha < 1$. A function $f : U \to \mathbb{C}$ satisfies a Lipschitz condition with exponent $\alpha$ on $U$ if there exists $k>0$ such that for all $z,w \in U$
\begin{equation}
\label{lip condition}
|f(z) - f(w)| \leq k |z-w|^{\alpha}.
\end{equation}

\bigskip

Let Lip$\alpha(U)$ denote the space of functions that satisfy a Lipschitz condition with exponent $\alpha$ on $U$. Lip$\alpha(U)$ is a Banach space with norm given by $\displaystyle ||f||_{Lip\alpha(U)} = \sup_{U} |f| + k(f)$, where $k(f)$ is the smallest constant that satisfies \eqref{lip condition}. 
\bigskip

An important subspace of Lip$\alpha(U)$ is the little Lipschitz class, lip$\alpha(U)$, which consists of those functions in Lip$\alpha(U)$ such that 

\begin{equation*}
    \lim_{\delta \to 0^+} \sup_{0<|z-w|<\delta} \dfrac{|f(z)-f(w)|}{|z-w|^{\alpha}} = 0
\end{equation*}

\bigskip

\noindent

Let $A_{\alpha}(U)$ denote the space of lip$\alpha$ functions that are analytic on $U$. $A_{\alpha}(U)$ is said to admit a bounded point derivation of order $t$ at $x_0 \in \partial X$ if there exists a constant $C$ such that

\begin{equation*}
    |f^{(t)}(x_0)| \leq C ||f||_{Lip\alpha}
\end{equation*}

\bigskip

\noindent whenever $f \in$ lip$\alpha(\mathbb{C})$ is analytic in a neighborhood of $U \cup \{x_0\}$. In \cite{Lord} Lord and O'Farrell determined necessary and sufficient conditions for $A_{\alpha}(U)$ to admit a bounded point derivation at $x_0$ in terms of lower dimensional Hausdorff content, which is defined in the next section. In that paper, what we call $A_{\alpha}(U)$ is denoted by $a_{\alpha}(U)$ since $A_{\alpha}(U)$ is used to denote the space of Lip$\alpha$ functions that are analytic on $U$. Their result is the following \cite[Theorem 1.2]{Lord}.

\begin{theorem}
\label{Lord}
There exists a bounded point derivation of order $t$ on $A_{\alpha}(U)$ at $x_0$ if and only if 

\begin{equation*}
    \sum_{n=1}^{\infty} 2^{(t+1)n} M_*^{1+\alpha}(A_n \setminus U) < \infty,
\end{equation*}

\bigskip

\noindent where $M_*^{1+\alpha}$ denotes lower $(1+\alpha)$-dimensional Hausdorff content.

\end{theorem}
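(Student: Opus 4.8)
The plan is to represent the order-$t$ derivation functional $f \mapsto f^{(t)}(x_0)$ as a sum of pieces localized to the dyadic annuli $A_n$, to bound each piece by a Lipschitz-$\alpha$ analytic capacity of the obstruction $A_n\setminus U$, and then to convert that capacity into the lower $(1+\alpha)$-dimensional content. For a compact set $E$ it is natural to introduce
\[
\gamma_\alpha(E)=\sup\Bigl\{\,|g'(\infty)|:\ g\ \text{analytic on}\ \mathbb{C}\setminus E,\ g(\infty)=0,\ \|g\|_{Lip\alpha}\le 1\Bigr\},
\]
where $g'(\infty)=\lim_{z\to\infty}z\,g(z)$ is the coefficient of $1/z$ at infinity. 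A dilation argument gives the scaling $\gamma_\alpha(\lambda E)=\lambda^{1+\alpha}\gamma_\alpha(E)$, which already explains why $(1+\alpha)$-dimensional content, and not $1$-dimensional content, is the correct gauge in this Lipschitz setting. The single external ingredient I would rely on is the comparability
\[
\gamma_\alpha(E)\approx M_*^{1+\alpha}(E),
\]
with constants depending only on $\alpha$; this is the Lipschitz analogue of the classical capacity/content estimates (a Dolzhenko-type result).

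For the sufficiency direction I would fix a smooth partition of unity $\{\varphi_n\}$ with $\varphi_n$ supported in a fixed dilate of $A_n$, $\sum_n\varphi_n\equiv 1$ near $x_0$, and $\|\bar\partial\varphi_n\|_\infty\lesssim 2^n$, and form the Vitushkin localizations $f_n=\varphi_n f-\tfrac1\pi\int\frac{\bar\partial\varphi_n(\zeta)f(\zeta)}{z-\zeta}\,dA(\zeta)$. Then $f=\sum_n f_n$ near $x_0$ up to a remainder coming from a single fixed scale, each $f_n$ is analytic off a fixed dilate of $A_n\setminus U$ and hence analytic near $x_0$, and the localization is bounded on $Lip\alpha$ uniformly in $n$, so $\|f_n\|_{Lip\alpha}\lesssim\|f\|_{Lip\alpha}$. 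Rescaling $A_n$ to unit size by $z=x_0+2^{-n}w$ turns $f_n$ into a function analytic off a compact set in a fixed annulus with $Lip\alpha$ seminorm $\lesssim 2^{-\alpha n}\|f\|_{Lip\alpha}$; comparing its $t$-th Taylor coefficient at the origin with $\gamma_\alpha$ of the rescaled obstruction and undoing the scaling yields the central single-annulus estimate
\[
\bigl|f_n^{(t)}(x_0)\bigr|\lesssim 2^{(t+1)n}\,\gamma_\alpha(A_n\setminus U)\,\|f\|_{Lip\alpha}\approx 2^{(t+1)n}\,M_*^{1+\alpha}(A_n\setminus U)\,\|f\|_{Lip\alpha},
\]
the factor $2^{(t+1)n}$ being exactly what the Cauchy kernel $(z-x_0)^{-(t+1)}$ contributes at scale $2^{-n}$. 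Summing over $n$ and invoking the hypothesis gives $|f^{(t)}(x_0)|\le\sum_n|f_n^{(t)}(x_0)|\lesssim\|f\|_{Lip\alpha}$, so the bounded point derivation exists.

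For the necessity direction I would argue by contraposition: assuming the series diverges, I would produce test functions of bounded $Lip\alpha$ norm whose $t$-th derivative at $x_0$ is arbitrarily large. For each $n$, the lower half of the comparison supplies a Frostman-type measure $\mu_n$ on $A_n\setminus U$ with mass $\approx M_*^{1+\alpha}(A_n\setminus U)$ and $(1+\alpha)$-growth, whose Cauchy transform $g_n=\widehat{\mu_n}$ is analytic off $A_n\setminus U$ with $\|g_n\|_{Lip\alpha}\lesssim 1$. Restricting $\mu_n$ to an angular sector to avoid phase cancellation in the coefficient $\int\zeta^{-(t+1)}\,d\mu_n$ (which costs only a constant depending on the fixed $t$) forces $g_n^{(t)}(x_0)\gtrsim 2^{(t+1)n}M_*^{1+\alpha}(A_n\setminus U)$. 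I would then assemble finite competitors $F_N=\sum_{n\le N}c_n g_n$ with $c_n\ge 0$, each analytic in a neighborhood of $U\cup\{x_0\}$, so that $F_N^{(t)}(x_0)\to\infty$ while the norms $\|F_N\|_{Lip\alpha}$ remain uniformly bounded, contradicting any bounded point derivation.

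The step I expect to be the main obstacle is controlling $\|F_N\|_{Lip\alpha}$ in this assembly, together with the deep comparison $\gamma_\alpha\approx M_*^{1+\alpha}$. The extremals $g_n$ live at separated dyadic scales $2^{-n}$, and one must show that their superposition does not accumulate in the Hölder seminorm; the key will be that an analytic $Lip\alpha$ function normalized at scale $2^{-n}$ obeys $\|g_n'\|_\infty\lesssim 2^{(1-\alpha)n}$ away from its annulus together with rapid decay of its Cauchy tails, so that for any fixed pair $z,w$ only boundedly many scales contribute to $|F_N(z)-F_N(w)|/|z-w|^\alpha$. Establishing this almost-orthogonality rigorously, and proving the upper half $\gamma_\alpha\lesssim M_*^{1+\alpha}$ of the comparison (a removability-type estimate), is where the real work lies; the localization, the single-annulus estimate, and the final summation are then routine.
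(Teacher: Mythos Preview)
The paper does not prove this theorem at all: Theorem~\ref{Lord} is simply quoted from Lord and O'Farrell \cite[Theorem~1.2]{Lord} as background, and no argument for it appears anywhere in the paper. So there is no ``paper's own proof'' to compare your proposal against.

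That said, your outline is the right one, and it closely mirrors the paper's proof of its own main result (Theorem~\ref{BMO}, the $\alpha=0$/BMO analogue), which in turn is modeled on the Lord--O'Farrell argument. A couple of differences in execution are worth noting. For sufficiency, the paper avoids the full Vitushkin localization $f_n$ you describe and instead writes $f^{(t)}(x_0)$ directly via the Cauchy integral over boundaries $\partial D_n$ of unions of dilated dyadic squares covering the singular set in $A_n$, applies Green's theorem, and exploits the measure-function definition of $M_*^{1+\alpha}$ (or $M_*^1$) rather than passing through a capacity $\gamma_\alpha$ and the comparison $\gamma_\alpha\approx M_*^{1+\alpha}$; this sidesteps the ``deep'' comparability you flag as an obstacle. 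For necessity, instead of restricting the Frostman measure to an angular sector to avoid phase cancellation in $\int\zeta^{-(t+1)}\,d\mu_n$, the paper (following Lord--O'Farrell) inserts the unimodular factor $(\zeta/|\zeta|)^{t+1}$ into the Cauchy transform, which forces $f_n^{(t)}(0)=t!\int|\zeta|^{-(t+1)}\,d\nu_n$ to be positive without any loss of mass. The almost-orthogonality concern you raise is handled in the paper by a direct case split over which annulus a test cube sits in, combined with the choice of a decreasing sequence $\epsilon_n\to 0$ and the grouping of finitely many $f_m$ into a single $g_n$ with controlled partial sums; no general superposition lemma is needed.
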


Theorem \ref{BMO} and Theorem \ref{Lord} both involve series of Hausdorff contents of annuli. These are similar to existence theorems for bounded point derivations on other function spaces such as Hallstrom's theorem for $R(X)$ \cite[Theorem 1, 1$'$]{Hallstrom}, Hedberg's theorem for $R^p(X)$, $p>2$ \cite[Theorem 2]{Hedberg}, and the theorem of Fernstrom and Polking for $R^2(X)$\cite[Theorem 6]{Fernstrom}. In addition, O'Farrell has recently proven a similar theorem to Theorem \ref{Lord} \cite[Theorem 3.7]{O'Farrell2019} for functions belonging to negative Lipschitz classes in which $-1<\alpha<0$. Notably, this excludes the case of $\alpha =0$. As we will see the case of $\alpha =0$ can be identified with the space of analytic VMO functions.

\section{Bounded Mean Oscillation and Hausdorff Content}

Let $f \in L^1_{loc}(\mathbb{C})$ and let $Q$ be a cube in the complex plane with area $|Q|$. The mean value of $f$ on $Q$, denoted by $f_Q$, is 

\begin{equation*}
    f_Q = \frac{1}{|Q|} \int_Q f(z) dA
\end{equation*}
\bigskip

\noindent and the mean oscillation of $f$ on a cube $Q$, denoted by $\Omega(f,Q)$, is 

\begin{equation*}
    \Omega(f,Q) = \frac{1}{|Q|} \int_Q |f(z)-f_Q| dA.
\end{equation*}

\bigskip

\noindent The function $f$ is said to be of bounded mean oscillation if 

\begin{equation*}
  ||f||_* =  \sup_Q \Omega(f,Q) < \infty,
\end{equation*}

\bigskip

\noindent where the supremum is taken over all cubes $Q$ in $\mathbb{C}$. Let $BMO(\mathbb{C})$ denote the set of functions of bounded mean oscillation and let $BMO(X) = \{f|_X : f \in BMO(\mathbb{C})\}$. Let $||f||_X = \displaystyle \inf ||f||_*$, where the infimum is taken over all functions $F$ such that $F =f$ on $X$. $||f||_X$ is a seminorm on $BMO(X)$, which vanishes only at the constant functions. If we let $\displaystyle ||f||_{BMO(X)} = ||f||_X + \left|\int_X f(z) dA\right|$, then $||f||_{BMO(X)}$ defines a norm on $BMO(X)$.

\bigskip
 An important subspace of $BMO(X)$ is $VMO(X)$, the space of functions of vanishing mean oscillation. For $f$ in $BMO(X)$ and $\delta >0$ let 

\begin{equation*}
    \Omega_f(\delta) = \sup_Q \{\Omega(f,Q): \text{ radius Q } \leq \delta\}.
\end{equation*}

\bigskip

\noindent  $VMO(X)$ consists of those functions in $BMO(X)$ which satisfy $\Omega_f(\delta) \to 0$ as $\delta \to 0^+$ for all cubes $Q \in \mathbb{C}$. Let $A_0(X)$ denote the space of restrictions to $X$ of $VMO$ functions which are analytic on a neighborhood of $X$. Alternately, $f$ is a function of bounded mean oscillation on a set $X$ if and only if there exists a constant $K$ such that for every cube $Q \subseteq X$ and some constant $c_Q$, the inequality

\begin{equation*}
 \int_{Q}|f(z) -c_Q| dA \leq K r^2  
\end{equation*}

\bigskip

\noindent holds where $r$ is the length of the edge of $Q$ \cite{John}. This is similar to an alternate characterization of Lip$\alpha$ functions. $f \in $ Lip$\alpha(X)$ if and only if there exists a constant $K$ such that for every cube $Q \subseteq X$ and some constant $c_Q$, the inequality

\begin{equation*}
 \int_{Q}|f(z) -c_Q| dA \leq K r^{2+\alpha}  
\end{equation*}

\bigskip

\noindent
holds where $r$ is the length of the edge of $Q$ \cite{Meyers}. In this way $A_0(X)$ can be seen as the limit as $\alpha \to 0$ of $A_{\alpha}(X)$.  Thus we say that $A_0(X)$ admits a bounded point derivation of order $t$ at $x_0 \in \partial X$ if there exists a constant $C$ such that

\begin{equation*}
    |f^{(k)}(x_0)| \leq C ||f||_{BMO(X)}
\end{equation*}

\bigskip

\noindent whenever $f \in VMO(\mathbb{C})$ is analytic in a neighborhood of $X \cup \{x_0\}$. 

\bigskip

We saw in the last section that bounded point derivations on $A_{\alpha}(U)$ are characterized by lower $(1+\alpha)$ dimensional Hausdorff content, which suggests that bounded point derivations on $A_0(X)$ are characterized by lower $1$-dimensional Hausdorff content. Some other examples of the connection between $VMO$ and lower $1$-dimensional Hausdorff content in the context of rational approximation can be found in the paper of Verdera \cite{Verdera} (Here what we call $A_0(X)$ is denoted by $VMO_a(X)$), the paper of Boivin and Verdera \cite{Boivin}, and the paper of Bonilla and Fari\~{n}a \cite{Bonilla}.  

\bigskip

We now define the lower $1$-dimensional Hausdorff content. A measure function is an increasing function $h(t)$, $t\geq 0$, such that $h(t) \to 0$ as $t \to 0$. If $h$ is a measure function then define

\begin{equation*}
    M^h(E) = \inf \sum_j h(r_j),
\end{equation*}

\bigskip

\noindent where the infimum is taken over all countable coverings of $E$ by squares with sides of length $r_j$. The lower $1$-dimensional Hausdorff content of $E$, denoted $M_*^1(E)$ is defined by 

\begin{equation*}
  M_*^1(E)= \sup M^h(E) 
\end{equation*}

\bigskip

\noindent where the supremum is taken over all measure functions $h$ such that $h(t) \leq t$ and $h(t)t^{-1} \to 0$ as $t\to 0^+$. Furthermore, the infimum can be taken over countable coverings of $E$ by dyadic squares \cite[pg. 61, Lemma 1.4]{Garnett}. It also follows from the definition that lower 1-dimensional Hausdorff content is subadditive; that is, $U \subseteq V$ implies $M_*^1(U) \leq M_*^1(V)$.

\section{The main result}

For the proof of Theorem \ref{BMO}, we will need the following lemma.

\begin{lemma}
\label{lem1}

Let $E_j$, $j= 1, \ldots, n$ be sets and let $E = \displaystyle \bigcup_{j=1}^{n} E_j$. Let $\displaystyle \psi = \sum_{j=1}^n \psi_j$, where each $\psi_j$ has support on $E_j$. Then

\begin{equation*}
    \int_E \psi dA = \sum_{j=1}^n \int_{E_j} \psi_j dA.
\end{equation*}

\end{lemma}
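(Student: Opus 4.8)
The plan is to reduce the claim to linearity of the integral together with the elementary fact that a function integrates to zero over any set disjoint from its support. First I would observe that for each fixed $j$, since the support of $\psi_j$ is contained in $E_j$ and $E_j \subseteq E$, the integrand $\psi_j$ vanishes identically on $E \setminus E_j$; hence $\int_E \psi_j \, dA = \int_{E_j} \psi_j \, dA + \int_{E \setminus E_j} \psi_j \, dA = \int_{E_j} \psi_j \, dA$. (Implicit here is that each $\psi_j$ is integrable, which holds in every application of the lemma in this paper, so the splitting of the domain of integration is legitimate.)

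Next I would apply finite additivity and linearity of the integral to the finite sum $\psi = \sum_{j=1}^n \psi_j$, writing $\int_E \psi \, dA = \int_E \sum_{j=1}^n \psi_j \, dA = \sum_{j=1}^n \int_E \psi_j \, dA$. Combining this with the previous step gives $\int_E \psi \, dA = \sum_{j=1}^n \int_{E_j} \psi_j \, dA$, which is exactly the asserted identity.

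The only point that deserves a word of care is that the sets $E_j$ need not be pairwise disjoint, so on an overlap $E_i \cap E_j$ several of the $\psi_j$ may be simultaneously nonzero. This causes no difficulty: at no stage do we split $\int_E \psi \, dA$ into integrals of $\psi$ over the individual pieces $E_j$ — we only split it into integrals of the individual functions $\psi_j$, each of which is genuinely supported in its own $E_j$, so no cancellation or double counting occurs. There is no real obstacle here; the lemma simply isolates a bookkeeping step about partitioning an integral according to the supports of a finite family of summands, which will be invoked repeatedly in the proof of Theorem \ref{BMO}.
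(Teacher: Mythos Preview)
Your argument is correct and in fact more streamlined than the paper's. You use linearity of the integral to write $\int_E \psi\,dA = \sum_{j=1}^n \int_E \psi_j\,dA$ and then the support condition to replace each $\int_E \psi_j\,dA$ by $\int_{E_j}\psi_j\,dA$; no induction is needed and the overlaps of the $E_j$ never enter the discussion. The paper instead argues by induction on $n$: for $n=2$ it explicitly partitions $E$ into the three disjoint pieces $E_1\setminus E_2$, $E_2\setminus E_1$, and $E_1\cap E_2$, evaluates $\int_E\psi\,dA$ over each piece, and reassembles; the general case then follows by grouping the first $n-1$ sets and applying the $n=2$ case. Your approach is shorter and makes transparent that the lemma is nothing more than linearity plus the observation that each $\psi_j$ vanishes on $E\setminus E_j$; the paper's decomposition-and-induction route makes the handling of overlaps more visible but is not needed logically.
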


\begin{proof}

We first prove the case of $n=2$.

\begin{align*}
    \int_E \psi dA &= \int_{E_1 \setminus E_2} \psi dA + \int_{E_2 \setminus E_1} \psi dA + \int_{E_1 \cap E_2} \psi dA\\
    &= \int_{E_1 \setminus E_2} \psi_1 dA + \int_{E_2 \setminus E_1} \psi_2 dA + \int_{E_1 \cap E_2} \psi_1 dA + \int_{E_1 \cap E_2} \psi_2 dA\\
    &= \int_{E_1} \psi_1 dA + \int_{E_2} \psi_2 dA.
\end{align*}

\bigskip

\noindent For the general case, let $F = \displaystyle \bigcup_{j=1}^{n-1}E_j$ and let $\psi_F = \displaystyle \sum_{j=1}^{n-1} \psi_j$. Then 

\begin{equation*}
    \int_E \psi dA = \int_F \psi_F dA + \int_{E_n} \psi_n dA, \end{equation*}
    
    \bigskip
    
    \noindent and the lemma follows by induction.

\end{proof}

We now prove Theorem \ref{BMO}.

\begin{proof}

Choose $f \in A_0(X)$ such that $f$ is analytic on $X$ and suppose that $||f||_* \leq 1$. For each $n$ let $K_n$ be a compact subset of $A_n \setminus X$ such that $f$ is analytic on $A_n \setminus K_n$. Since $f$ has a finite number of poles, we only need a finite number of $K_n$. Fix $n$ and let $\{Q_j\}$ be a covering of $K_n$ by dyadic squares so that no squares overlap except at their boundaries. Let $r_j$ denote the side length of $Q_j$. Let $Q_j^* = \frac{3}{2}Q_j$, the square with side length $\frac{3}{2}r_j$ and the same center as $Q_j$, and let $D_n = \bigcup Q_j^*$. Then by the Cauchy integral formula

\begin{align*}
    f^{(t)}(x_0) = \frac{t!}{2\pi i} \sum_n \int_{\partial D_n} \dfrac{f(z)}{(z-x_0)^{t+1}}dz.\\
\end{align*}

\bigskip

\noindent For each individual square $Q_j$, we can construct a smooth function $\phi_{j}$ such that $ \phi_{j}$ has support on $Q_j^*$, $||\nabla \phi_{j}||_{\infty} \leq C r_j^{-1}$, and $\sum_j \phi_j = 1$ on a neighborhood of $\bigcup \phi_j$. Such a construction can be found in \cite[Lemma 3.1]{Harvey}. Let $\displaystyle \phi = 1- \sum_j \phi_j$. It then follows from Green's theorem and Lemma \ref{lem1} that

\begin{align*}
    \left|\frac{t!}{2\pi i}  \int_{\partial D_n} \dfrac{f(z)}{(z-x_0)^{t+1}} dz\right| &= \left|\frac{t!}{2\pi i} \int_{\partial D_n} \dfrac{f(z) \phi(z)}{(z-x_0)^{t+1}}dz\right|\\
    &= \left|\frac{t!}{\pi} \int_{ D_n} \dfrac{f(z) }{(z-x_0)^{t+1}} \dfrac{\partial \phi}{\partial \overline{z}}dA\right|\\
    &\leq \frac{t!}{\pi} \sum_j \left|\int_{Q_j^*} \dfrac{f(z) }{(z-x_0)^{t+1}} \dfrac{\partial \phi_j}{\partial \overline{z}}dA\right|.\\
\end{align*}

\bigskip

\noindent Since $\int_{Q_j^*}(z-x_0)^{-(t+1)}\dfrac{\partial \phi_j}{\partial \overline{z}} dA= \int_{\partial Q_j^*}(z-x_0)^{-(t+1)} \phi_j(z) dz = 0 $, it follows that

\begin{align*}
      \frac{t!}{\pi} \sum_j \left|\int_{Q_j^*} \dfrac{f(z) }{(z-x_0)^{t+1}} \dfrac{\partial \phi_j}{\partial \overline{z}}dA\right|
    & \leq C \sum_j \int_{Q_j^*} |f(z) - f_{Q_j^*}| \cdot |z-x_0|^{-(t+1)}\left|\dfrac{\partial \phi}{\partial \overline{z}}\right| dA  \\
    &\leq C 2^{(t+1)n} \sum_j  m(Q_j^*) \Omega(f, Q_j^*) r_j^{-1} \\
    &\leq C 2^{(t+1)n} \sum_j r_j \Omega_f \left(\frac{3}{2}r_j\right).   \\
\end{align*}

\bigskip

\noindent Since, $f \in VMO(\mathbb{C})$, the measure function $h(t) = t\Omega_f \left(\frac{3}{2}t\right)$ satisfies the conditions in the definition of $M^1_*$. Hence by taking the infimum over all such covers $\{Q_j\}$, we have that

\begin{equation*}
    \left|\frac{t!}{2\pi i}  \int_{\partial D_n} \dfrac{f(z)}{(z-x_0)^{t+1}} dz\right| \leq  C 2^{(t+1)n} M_*^1(K_n) 
\end{equation*}

\bigskip

\noindent and since $M_*^1$ is subadditive, it follows that

\begin{align*}
    |f^{(t)}(x_0)| &\leq C  \sum_{n=1}^{\infty} 2^{(t+1)n} M_*^1( K_n)\\
    &\leq C  \sum_{n=1}^{\infty} 2^{(t+1)n} M_*^1(A_n \setminus X)\\
    &\leq C.\\
\end{align*}

\bigskip

\noindent If $g(z) \in VMO(\mathbb{C})$ is analytic on $X \cup \{x_0\}$, let $f(z) = \dfrac{g(z)}{||g||_*}$. Then $||f||_* \leq 1$ and hence $|f^{(t)}(x_0)| \leq C.$ Thus $|g^{(t)}(x_0)| \leq C||g||_*$ and $A_0(X)$ admits a bounded point derivation of order $t$ at $x_0$.

\bigskip

To prove the converse, we can assume that $x_0 = 0$ and $X$ is entirely contained in the unit disk, and we suppose that 

\begin{equation*}
    \sum_{n=1}^{\infty} 2^{(t+1)n}M_*^1(A_n \setminus X) = \infty,
\end{equation*}

\bigskip

\noindent and we choose a decreasing sequence $\epsilon_n$ such that 

\begin{equation*}
    \sum_{n=1}^{\infty} 2^{(t+1)n}\epsilon_n M_*^1(A_n \setminus X) = \infty
\end{equation*}

\bigskip

\noindent and $|2^{(t+1)n}\epsilon_n M_*^1(A_n \setminus X)| \leq 1$ for all $n$.

\bigskip

We now modify a construction used by Lord and O'Farrell for approximation in Lipschitz norms \cite[pg.12]{Lord}. It follows from Frostman's Lemma that for each $n$ there exists a positive measure $\nu_n$ with support on $A_n \setminus X$ such that

\begin{enumerate}
\item $\nu_n(B(z,r)) \leq \epsilon_n r$ for all balls $B$,

    \item $\int \nu_n = C \epsilon_n M_*^1(A_n \setminus X)$.
\end{enumerate}

\bigskip

Let

\begin{equation*}
    f_n(z) = \int \left(\dfrac{\zeta}{|\zeta|}\right)^{t+1} \dfrac{d\nu_n(\zeta)}{\zeta -z}
\end{equation*}

\bigskip

\noindent The same argument used in the proof of (b) of \cite{Kaufman} (See also \cite[pg.288]{Verdera}.) shows that $f_n$ is analytic off $A_n$, $||f_n||_* \leq C \epsilon_n$ and $f_n \in A_0(X)$. In addition,

\begin{equation*}
    f_n^{(k)}(0) = t! \int \dfrac{d\nu_n(\zeta)}{|\zeta|^{t+1}}
\end{equation*}

\bigskip

\noindent Hence $f_n^{(t)}(0) \geq C 2^{(t+1)n} \epsilon_n M^1_*(A_n \setminus X)$. For each $n$ choose $p>n$ such that 

\begin{equation*}
   1 \leq \sum_{m=n}^p 2^{(t+1)m} \epsilon_m M^1_*(A_m \setminus X) \leq 2 
\end{equation*}

\bigskip

\noindent and let 

\begin{equation*}
    g_n(z) = \sum_{m=n}^p f_m(z).
\end{equation*}

\bigskip
\noindent It follows that $g_n^{(t)}(0)$ is bounded below by a nonzero constant for all $n$. We wish to show that $||g_n||_{BMO(X)} \to 0$ as $n\to \infty$. 

\bigskip

Let $Q$ be a cube in the annulus $A_k = \{2^{-(k+1)} \leq |z| \leq 2^{-k}\}$ and choose $f_m$ with $n \leq m \leq p$. Let $f_{m,Q} = \frac{1}{|Q|} \int_Q f_m dA$. Then there are 3 cases.

\begin{enumerate}
    \item $k = m-1$, $m$, or $m+1$.
    \item $k \geq m+2$
    \item $k \leq m-2$.
\end{enumerate}

\bigskip

If $k = m-1$, $m$, or $m+1$ then 

\begin{equation*}
\Omega(f_m, Q) = \dfrac{1}{|Q|} \int_Q |f_m(z) - f_{m,Q}|dA_z \leq ||f_m||_* \leq C \epsilon_n.
\end{equation*}
\bigskip

If $k \geq m+2$ then 

\begin{align*}
\dfrac{1}{|Q|} \int_Q |f_m(z) - f_{m,Q}|dA_z &\leq 2||f_m||_{L^{\infty}(A_k)}\\
& \leq 2 \cdot 2^{m+2} \int \nu_m\\
& \leq C 2^m \epsilon_m M^1_*(A_m \setminus X).
\end{align*}

\bigskip

If $k \leq m-2$ then 

\begin{align*}
\dfrac{1}{|Q|} \int_Q |f_m(z) - f_{m,Q}|dA_z &\leq 2||f_m||_{L^{\infty}(A_k)}\\
& \leq 2 \cdot 2^{m-2} \int \nu_m\\
& \leq C 2^m \epsilon_m M^1_*(A_m \setminus X).
\end{align*}

\bigskip

It follows from the triangle inequality that 

\begin{align*}
    ||g_n||_* &\leq \sum_{m=n}^p ||f_m||_*\\
    &\leq 3C\epsilon_n + C \sum_{m=n}^p 2^m \epsilon_m M^1_*(A_m \setminus X)\\
    & \leq 3C\epsilon_n + C2^{-n} \sum_{m=n}^p 4^m \epsilon_m M^1_*(A_m \setminus X)\\
    &\leq C (3\epsilon_n + 2^{-n}).
\end{align*}

\bigskip

\noindent Moreover from Fubini's theorem,

\begin{align*}
    ||f_m||_{L^1(X)} &= \int_X \left| \int \left(\dfrac{\zeta}{|\zeta|}\right)^{t+1} \dfrac{d\nu_m(\zeta)}{\zeta -z}\right| dA(z)\\
    &\leq \int_X \int \dfrac{d\nu_m(\zeta)}{|\zeta -z|}dA\\
    &= \int \int_X \dfrac{1}{|\zeta -z|}dA(z) d\nu_m(\zeta)
\end{align*}

\bigskip

\noindent Fix $\zeta$ and let $B_r$ denote the ball with radius $r$ centered at $\zeta$, so that the area of $B_r$ is the same as the area of $X$. Then 

\begin{align*}
    \int \int_X \dfrac{1}{|\zeta -z|}dA(z) d\nu_m(\zeta) &\leq \int \int_{B_r} \dfrac{1}{|\zeta -z|}dA(z) d\nu_m(\zeta)\\
    &\leq 2 \pi r \int d\nu_m(\zeta)\\
    & \leq C \epsilon_m M^1_*(A_m \setminus X) 
\end{align*}

\bigskip

\noindent Thus, $\displaystyle ||g_n||_{L^1(X)} \leq C \sum_{m =n}^p \epsilon_m M^1_*(A_m \setminus X) \leq C$. Hence $||g_n||_{BMO(X)} \to 0$ as $n\to \infty$, but $g_n^{(t)}(0)$ is bounded away from $0$ for all $n$. Hence $A_0(X)$ does not admit a bounded point derivation at $0$.

\end{proof}

\section{Examples}

In this section, we will construct some examples for which Theorem \ref{BMO} applies. The first construction is that of a set $X$ such that $A_0(X)$ does not admit a bounded point derivation at $0$ but such that $A_{\alpha}(X)$ admits a bounded point derivation at $0$ for all $\alpha >0$.

\bigskip

Let $D$ be the open unit disk and let $A_n = \{2^{-(n+1)}\leq |z|\leq 2^{-n}\}$. For each $n$, let $B_n$ be a closed disk contained entirely in $A_n$ with radius $r_n = 4^{-n} n^{-1}$ and let $X = D \setminus \bigcup B_n$. This is known as a roadrunner set. See figure \ref{roadrunner}.

\begin{figure}[t]
\begin{center}
\begin{tikzpicture}[scale=1]

\tikzset{->-/.style={decoration={
  markings,
  mark=at position .5 with {\arrow[scale=2]{>}}},postaction={decorate}}}

     
     
   \draw[black] (0,0) circle (3cm);
   \draw [black](2,0) circle (.5cm);
   \draw [black](1,0) circle (.25cm);
   \draw [black](.5,0) circle (.125cm);
   \filldraw[fill=black, draw=black] (0,0) circle (.08 cm);
   \node [below] (x) at (0, 0){$0$};

\end{tikzpicture}
\caption{A roadrunner set}
\label{roadrunner}

\end{center}
\end{figure}
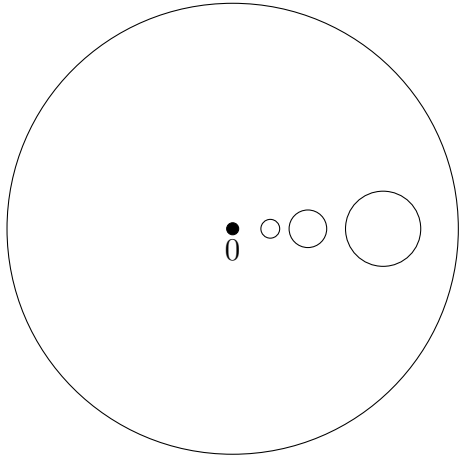

Since $M^1_*(A_n \setminus X) = M^1_*(B_n) = r_n$, it follows that

\begin{equation*}
    \sum_{n=1}^{\infty} 4^n M^1_*(A_n \setminus X) = \sum_{n=1}^{\infty} n^{-1} = \infty,
\end{equation*}

\bigskip

\noindent and thus $A_0(X)$ does not admit a bounded point derivation at $0$. However, 
$M^{1+\alpha}_*(A_n \setminus X) = M^{1+\alpha}_*(B_n) = r_n^{1+\alpha}$. Hence 

\begin{equation*}
    \sum_{n=1}^{\infty} 4^n M^{1+\alpha}_*(A_n \setminus X) = \sum_{n=1}^{\infty} 4^{-n\alpha} n^{-(1+\alpha)} < \infty,
\end{equation*}

\bigskip

\noindent and thus $A_{\alpha}(X)$ admits a bounded point derivation at $0$ for all $\alpha >0$.

\bigskip For an example of a set for which $A_0(X)$ admits a bounded point derivation at $0$, we can modify the previous construction so that the removed disks $B_n$ have radii $r_n = 4^{-n} n^{-2}$. Then

\begin{equation*}
    \sum_{n=1}^{\infty} 4^n M_*^1(A_n \setminus X) = \sum_{n=1}^{\infty} n^{-2} < \infty.
\end{equation*}

\bibliographystyle{amsplain}

\begin{thebibliography}{99}



\bibitem{Boivin}
Boivin, B. and Verdera, J.
\textit{Approximation par fonctions holomorphes dans les espaces $L^p$, Lip$\alpha$ et BMO.} (French)
Indiana Univ. Math. J. Vol. 40, No. 2 (1991) 393-418

\bibitem{Bonilla}
Bonilla, A. and Fari\~{n}a, J.C.
\textit{Meromorphic and entire approximation in BMO-norm}
J. Approx. Theory 76, (1994) 203-218



 

\bibitem{Fernstrom}
Fernstr\"{o}m, C. and Polking, J.C.
\textit{Bounded point evaluations and approximation in $L^p$ by solutions of elliptic partial differential equations}.
J. Functional Analysis 28 (1978), no. 1, 1-20



\bibitem{Garnett}
Garnett, J.B. 
\textit{Analytic Capacity and Measure}
Lecture Notes in Math., vol. 1043, Springer-Verlag, Berlin and New York, 1972.

\bibitem{Hallstrom}
Hallstrom, A.P.,
\textit{On bounded point derivations and analytic capacity}.
J. Functional Analysis 3 (1969) 35-47


\bibitem{Harvey}
Harvey, R. and Polking, J.
\textit{Removable singularities of solutions of linear partial differential equations}
Acta Math. 125 (1970), 39-56



\bibitem{Hedberg}
Hedberg, L.I.
\textit{Bounded point evaluations and capacity}.
J. Functional Analysis 10 (1972), 269-280

\bibitem{John}
John, F. and Nirenberg, L.
\textit{On functions of bounded mean oscillation.}
Comm. Pure Appl. Math. 14 (1961), 415-426

\bibitem{Kaufman}
Kaufman, R.
\textit{Hausdorff measure, BMO, and analytic functions}
Pacific J. Math. 102 (1982) 369-371


\bibitem{Lord}
Lord, D.J. and O'Farrell, A.G.
\textit{Boundary smoothness properties of Lip$\alpha$ analytic functions}.
 J. Anal. Math. 63 (1994), 103–119.



\bibitem{Meyers}
Meyers, N.G.
\textit{Mean oscillation over cubes and H{\"o}lder continuity.}
Proc. Amer. Math. Soc, Vol. 15 No. 5 (1964), 717-721


\bibitem{O'Farrell2019}
O'Farrell, A.G.
\textit{Boundary values of holomorphic distributions in negative Lipschitz classes.}
Preprint 2019. arXiv 1806.09979

\bibitem{Verdera}
Verdera, J.
\textit{BMO rational approximation and one-dimensional Hausdorff content}
Trans. Amer. Math. Soc. Vol. 297 No. 1 (1986), 283-304.


\end{thebibliography}

\end{document}